\newtheorem{theorem}{Theorem}
\newtheorem{theorem*}{Theorem}
\newtheorem{proposition}{Proposition}
\newtheorem{lemma}{Lemma}  
\newtheorem{definition}{Definition}
\def\metric#1{\langle #1 \rangle}
\def\metrictwo#1{\langle\!\langle #1 \rangle\!\rangle}
\def\cmetrictwo#1{\langle\!\langle #1 \rangle\!\rangle_{\mathbb{C}}}
\def\normtwo#1{\lVert #1\rVert}
\def\R{\mathbb{R}}
\def\C{\mathbb{C}}
\def\CP{\mathbf{CP}}
\def\S{\mathbb{S}}
\def\l{\lambda}
\def\a{\alpha}
\def\e{\epsilon}
\def\g{\gamma}
\def\s{\sigma}
\def\t{\theta}
\def\r{\rho}
\def\w{\omega}
\def\x{\xi}
\def\y{\eta}
\def\z{\zeta}
\def\E{\mathbb{E}}
\def\cW{{\mathcal W}}
\def\cK{{\mathcal K}}
\def\cU{{\mathcal U}}
\def\into{\rightarrow}
\DeclareMathOperator{\diag}{diag}
\DeclareMathOperator{\re}{re}
\DeclareMathOperator{\im}{im}
\def\smallskip{\par\vspace{1mm}}
\def\medskip{\par\vspace{2mm}}
\def\bigskip{\par\vspace{3mm}}
\def\fr#1#2{\frac{#1}{#2}}
\def\m#1{\begin{bmatrix}#1\end{bmatrix}}
\def\thenumber{0}
\def\eq#1{\global\advance\equationcount by 1
   \def\thenumber{\number\equationcount}
                        {$$#1\eqno(\thenumber)$$}}
\def\CP{\mathbb{CP}}
\newcommand{\beq}{\begin{equation}}
\newcommand{\Leq}[1]{\label{#1}\end{equation}}
\begin{document}

\title[No Infinite Spin]{No infinite spin for planar total collision}
\author{Richard Moeckel}
\author{Richard Montgomery}
\address{School of Mathematics\\ University of Minnesota\\ Minneapolis MN 55455}

\email{rick@math.umn.edu}

\address{Dept. of Mathematics\\ University of California, Santa Cruz\\ Santa Cruz CA}

\email{rmont@count.ucsc.edu}

\keywords{Celestial mechanics, n-body problem, infinite spin}

\subjclass[2010]{37N05,70F10, 70F15,  70F16, 70G40,70G60}

\begin{abstract}
The infinite spin problem concerns  the rotational behavior of total collision orbits in the $n$-body problem.    It has long been known that when a solution tends to  total collision then its  normalized configuration curve  must converge to the set of 
normalized central configurations.  In the planar n-body problem every normalized configuration determines a circle of rotationally equivalent normalized  configurations and, in particular, there are circles of normalized central configurations.  It's conceivable that by means of an {\em infinite spin}, a total collision solution could converge to such a circle instead of to a particular point on it.   Here we prove that this is not possible, at least  if the limiting circle of central configurations is isolated from other circles of central configurations.  (It is believed that all central configurations are isolated, but this is not known in general.)  Our  proof relies on  combining  the center manifold theorem  with the \L{}ojaseiwicz gradient inequality.  
\end{abstract}

\date{January 30, 2023}
\maketitle

\section{Introduction}
Consider the planar $n$ body problem with masses $m_i>0$, positions $q_i\in\R^2$ and let $q=(q_1,\ldots,q_n)\in \R^{2n}$   The motion is governed by 
Newton's equations of motion
$$m_i\ddot q_i = \nabla_i U(q)$$
where
\begin{equation}\label{eq_U}
U(q)= \sum_{i<j}\frac{m_im_j}{r_{ij}}\qquad r_{ij}=|q_i-q_j|
\end{equation}
and $\nabla_i$ is the partial gradient with respect to $q_i$.
The translation symmetry of the problem implies that we may assume without loss of generality that the total momentum is zero and that the center of mass is fixed at the origin, that is
\begin{equation}\label{eq_cofm}
m_1q_1+\ldots+m_nq_n=0\qquad m_1v_1+\ldots+m_nv_n=0.
\end{equation}

A solution $q(t)$ has a {\em total collision} at time $T$ if all of the positions $q_i(t)$ converge to the same point as $t\into T$. This collision point must be the center of mass so we have $q(t)\into 0\in \R^{2n}$.  Let 
$$I(q)= \sum_i m_i|q_i|^2$$
be the moment of inertia with respect to the origin.  We can think of $I(q)$ as the square of a {\em mass norm} on $\R^{2n}$ and write $I(q) = \normtwo{q}^2$.
Then the quantity $r(q)=\sqrt{I(q)}= \normtwo{q}$ is a convenient measure of the distance to collision.   The unit vector $\hat q = q/\normtwo{q}$ is the corresponding {\em normalized configuration}.    Taking into account the center of mass condition, the normalized configurations form a sphere $S\simeq \S^{2n-3}$.

A classical result of Chazy \cite{Chazy} about total collision solutions asserts that their normalized configuration curve
 $\hat q(t)$ converges to the set of normalized  central configurations as $t \to T$, the total collision time. 
\begin{definition}
A point $q\in\R^{2n}$ is a {\em central configuration or CC} if
\begin{equation}\label{eq_CC}
\nabla_i U(q) + \l m_iq_i=0
\end{equation}
for some constant $\l>0$.
\end{definition}
\noindent This means that the gravitational acceleration on the $i$-th body is $-\l q_i$ which points toward the origin and is proportional to the distance to the origin.  If we release the masses at a central configuration $q_0$ with initial zero velocity we obtain a simple example of a total collision solution of the form $q(t)=f(t)q_0$ where $f(t)>0$ is a scalar function with $f(t)\into 0$ as $t\into T$.  There are many other less obvious examples of total collision but always the limit points of $\hat q$ are CCs.

A normalized CC is just a CC with $I(q) = \normtwo{q}=1$, that is, $q\in S$.  If we think of $\l$ as a Lagrange multiplier, then (\ref{eq_CC}) can be viewed as  the equation for critical points of the Newtonian potential $U(q)$ restricted to the unit sphere $S$.  The problem of infinite spin arises due to the rotational symmetry of the $n$-body problem.  Any configuration $q=(q_1,\ldots,q_n) \ne 0$ determines a circle of symmetrical configurations $R(\t)q=(R(\t)q_1,\ldots,R(\t)q_n)$ where $R(\t)\in SO(2)$ is a $2\times 2$ rotation matrix.   If $q$ is a CC, then so is every
one of its rotates $R(\t)q \in SO(2) q$.   In other words, there are circles of critical points for $U(q)$ on $S$.  Now pass  to the $(2n-4)$-dimensional quotient manifold $S/SO(2)$ 
which is diffeomorphic to the complex projective space $\CP(n-2)$.
Write the quotient  map $S \to \CP(n-2)$ as $q \mapsto [q]$. Write  $[q] \mapsto U([q])$  for the function  on $\CP(n-2)$ induced by $U$. 
 A famous conjecture about CCs is that there are only finitely many of them up to symmetry,  or equivalently, that $U([q])$ has only finitely many critical points.  This is known to be the case for $n=3,4$ and for generic masses when $n=5$.   Whether or not the conjecture holds, we   call $q$ an {\em isolated CC} if $[q]$ is an isolated critical point of $U([q])$ and a nondegenerate CC if it is
 a nondegenerate critical point of $U([q])$.  (Nondegenerate CCs are necessarily  isolated CCs.)    
 
If $q(t)$ is a total collision solution we can form  the curves $\hat q(t)\in S$ and $[\hat q(t)]\in S/SO(2)$.  It follows from Chazy's result that $[\hat q(t)]$ converges to a compact subset of the set of critical points of $U([q])$ as $t \to T$.  If the limit set of $[\hat q(t)]$  contains an isolated critical point $[q_0]$ then it must be that $[\hat q(t)]\into [q_0]$ as $t\into T$.  It follows that $\hat q(t)\in S$ converges to the corresponding circle of normalized critical points in $S$ and it is natural to wonder if $\hat q(t)$ converges to a particular CC or to a nontrivial subset of the circle.
For example it's easy to imagine that by undergoing {\em infinite spin} it might converge to the whole circle.  The main goal of this paper is to show this can't happen.

\begin{theorem}\label{th_nospin}
Suppose $q(t)$ is a solution of the planar $n$-body with $q(t)\into 0$ as $t\into T$ and suppose its reduced and normalized configuration $[\hat q(t)]\in S/SO(2)$ converges to  an isolated CC.  If $\t(t)$ is an angular variable on the corresponding circle of CCs  in S, then $\t(t)$ converges as $t\into T$ and so $\hat q(t)$ converges to a particular CC in the circle.
\end{theorem}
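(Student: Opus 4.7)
The plan is to use a McGehee-style blow-up of total collision so that the problem becomes the study of convergence to a circle of equilibria in a smooth (in fact analytic) flow, and then to combine the \L{}ojasiewicz gradient inequality on the shape-reduced dynamics with the vanishing of total angular momentum to deduce convergence of the angular variable. Concretely, I would first introduce $r = \normtwo{q}$ and $\hat q = q/r \in S$, rescale velocities, and change to McGehee time $d\tau/dt = r^{-3/2}$ so that total collision corresponds to $\tau \to \infty$ and the equations extend analytically across the invariant collision manifold $\{r=0\}$. The $SO(2)$-orbit of the given isolated CC $[q_0]$ becomes a circle $\Gamma$ of equilibria for the blown-up flow which, by hypothesis, is isolated from other equilibria.

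\emph{Shape convergence via \L{}ojasiewicz.} Next I would reduce the blown-up flow by the $SO(2)$ symmetry, obtaining an analytic vector field on a phase space whose configuration factor is $\CP(n-2)$ and on which $\Gamma$ collapses to the isolated critical point $[q_0]$. Because $U$ is real-analytic, the \L{}ojasiewicz gradient inequality applies to the induced potential near $[q_0]$. Combined with the Lyapunov (monotonicity) property of $U$ along McGehee's rescaled flow, this should upgrade Chazy's ``$\omega$-limit lies in the critical set'' to an integrable velocity statement: $[\hat q(\tau)] \to [q_0]$ and
\begin{equation*}
\int_0^{\infty} \normtwo{\tfrac{d}{d\tau}\,[\hat q(\tau)]}\, d\tau < \infty.
\end{equation*}

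\emph{Angular bound from zero angular momentum.} For any total collision orbit the total angular momentum $J = \sum_i m_i\, q_i \times v_i$ must vanish. Decomposing $q = R(\theta)\tilde q$ with $\tilde q$ in a local slice for the $SO(2)$ action, this identity reads
\begin{equation*}
\dot\theta\, I(\tilde q) = -\sum_i m_i\, \tilde q_i \times \dot{\tilde q}_i,
\end{equation*}
so $\dot\theta$ is algebraically controlled by the shape variable and its time derivative. Here the center manifold theorem enters: near $\Gamma$ the linearization has a center direction tangent to the circle of equilibria (together with any further center directions coming from possible degeneracy of $[q_0]$), and the center manifold furnishes smooth local coordinates of the form $(\theta,\,\text{shape variables})$ in which the identity above translates to a uniform bound $|d\theta/d\tau| \leq C\, \normtwo{d[\hat q]/d\tau}$ in a neighborhood of $\Gamma$. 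Combined with the previous step, this yields $\int_0^\infty |d\theta/d\tau|\, d\tau < \infty$, so $\theta(\tau)$ converges and $\hat q(t)$ converges to a particular CC on the circle.

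\emph{Main obstacle.} The delicate point is making the \L{}ojasiewicz step precise: the blown-up flow is not literally a gradient flow of $U$, so a direct application of the inequality requires either identifying an analytic Lyapunov function whose negative gradient agrees with the reduced flow modulo controlled corrections, or first performing a center/center-stable manifold reduction so that the effective flow is approximately gradient-like on a finite-dimensional invariant submanifold. Either way, the interplay between the center manifold theorem and the \L{}ojasiewicz inequality is the core of the argument; once that coupling is in place, the zero-angular-momentum identity converts shape convergence into angular convergence in a routine way.
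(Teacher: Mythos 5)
Your proposal follows essentially the same route as the paper: McGehee blow-up, $SO(2)$-reduction, finite Fubini--Study arclength obtained by restricting to a center manifold on which the flow is approximately the gradient flow of the restricted potential and then applying the \L{}ojasiewicz inequality, and finally the zero-angular-momentum identity $\t' = -\Omega(s,w)/\normtwo{(s,1)}^2$, which bounds $|\t'|$ by the Fubini--Study speed and converts finite arclength into convergence of $\t$. The one subtlety you leave implicit, and which the paper must handle explicitly, is that the center manifold is only finitely smooth, so the \L{}ojasiewicz inequality for the restricted potential $W = V\circ g$ cannot be invoked from analyticity directly but must be transferred from the analytic $V$ via two-sided comparisons of $|\hat\nabla W(x)|$ and $|\tilde\nabla V(g(x))|$ with $|\phi(x)|$.
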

\noindent This theorem was proved by Chazy under the assumption that the limiting CC  is nondegenerate \cite{Chazy} so the problem is really  to handle degenerate cases.
Wintner mentions this as an open problem and it appears as problem 5 in a list of open problems in celestial mechanics \cite{AlbCabSan}.   Here we are solving  the planar case with an isolated  limiting CC.

Theorem~\ref{th_nospin} will follow from another result about solutions in a blown-up and rotation-reduced phase space.  The modern approach to studying total collision uses McGehee's blow-up technique which involves introducing the size coordinate $r$ and a set of coordinates on the unit sphere $S$ as well as suitable velocity variables and a new timescale $\tau$ \cite{McGehee,McGeheeSing}.  In these coordinates $\{r=0\}$ becomes an invariant {\em total collision manifold}.  There are restpoints in the collision manifold associate to the normalized CCs.  Orbits which previously experienced  total collision as $t\nearrow T$ now converge to a compact subset of the set of restpoints in the collision manifold as $\tau\into +\infty$.  In the next section, we will introduce such coordinates together with an angular coordinate $\t$ and reduced coordinates corresponding to the quotient space $S/SO(2)$.  In the end we obtain a blown-up and rotation reduced problem together with an integral formula for the angle $\t$.  Then we will show
\begin{theorem}\label{th_finitearclength}
For the blown-up and rotation reduced problem, total collision solutions which converge to isolated CCs have finite arclength with respect to a natural Riemannian metric.
\end{theorem}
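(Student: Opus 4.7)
The plan is to combine the center manifold theorem with the \L{}ojasiewicz gradient inequality, as the abstract hints. In the blown-up, rotation-reduced coordinates of Section~2, the hypothesis translates into the following: a trajectory $x(\tau)$ of a smooth vector field on a smooth manifold converges as $\tau\to+\infty$ to an isolated restpoint $p_0$ that lifts the limiting reduced central configuration. The quantity to bound is $\int_{\tau_0}^\infty \|\dot x(\tau)\|\,d\tau$ in the natural Riemannian metric.

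First, I would linearize the reduced vector field at $p_0$ and split the tangent space into stable, unstable, and center subspaces. Since $x(\tau)\to p_0$, the trajectory lies on the local center-stable manifold. By the center manifold theorem there is a smooth invariant center manifold $W^c$ through $p_0$ together with a smooth projection onto it whose kernel consists of strongly contracting directions. Write $x(\tau)=y(\tau)+z(\tau)$ with $y(\tau)\in W^c$ and $z(\tau)$ in the transverse strong-stable complement. The component $z(\tau)$ decays exponentially in $\tau$, so $\int\|\dot z\|\,d\tau<\infty$ is automatic, and the problem reduces to showing that $y(\tau)$ has finite arclength on $W^c$.

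For this step I would invoke the well-known fact from McGehee's theory that the flow restricted to the collision manifold is gradient-like with respect to (a multiple of) $U$ pulled back to $S/SO(2)$, with the radial velocity $v$ serving as a Lyapunov function. On $W^c$ the hyperbolic directions have been factored out, so the restricted dynamics inherits this gradient-like structure up to exponentially small corrections coming from $z(\tau)$. Because $U$ is real-analytic, the \L{}ojasiewicz gradient inequality applies near $p_0$ on $W^c$: there exist constants $c>0$ and $\alpha\in(0,1)$ with $\|\nabla U(y)\|\geq c\,|U(y)-U(p_0)|^\alpha$. The classical argument, namely differentiating $|U(y(\tau))-U(p_0)|^{1-\alpha}$ along the flow and combining this with an estimate of the form $\dot U \leq -c'\|\dot y\|^2$, then yields $\int\|\dot y(\tau)\|\,d\tau<\infty$.

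The main obstacle will be the gradient-like reduction on $W^c$. The blown-up McGehee vector field is not literally a gradient, and $W^c$ may contain directions coming from degenerate tangential-velocity modes in addition to degenerate configurational modes. One must verify that on $W^c$ the velocity variables are slaved to the configuration variables in a way that preserves enough of the gradient-like structure for the \L{}ojasiewicz estimate on $U$ to control the full arclength, and that the natural Riemannian metric of the theorem restricts to a metric on $W^c$ comparable to the one used in the \L{}ojasiewicz computation. Once this identification is in place, the standard combination of center manifold reduction and the analytic \L{}ojasiewicz inequality delivers the stated finite arclength.
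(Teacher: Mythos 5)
Your overall architecture matches the paper's: dispose of the hyperbolic directions by exponential convergence, pass to the center-stable manifold, project to a shadowing orbit in the center manifold, and run a \L{}ojasiewicz-type finite-arclength argument there. The gap is in the sentence ``Because $U$ is real-analytic, the \L{}ojasiewicz gradient inequality applies near $p_0$ on $W^c$.'' It does not, at least not for free. The center manifold is in general only finitely smooth, not analytic, so the restriction $W = V\circ g$ of the (analytic) potential to $W^c$ is not an analytic function and \L{}ojasiewicz's theorem does not apply to it directly. What the classical finite-arclength computation needs is a lower bound on the \emph{intrinsic} gradient $\hat\nabla W$ along $W^c$ in terms of $|W - W(p_0)|^{\alpha}$; the ambient inequality $|\tilde\nabla V(s)|^2 \ge C|V(s)-V(s_0)|^{\alpha}$ bounds only the full ambient gradient, and a priori its component tangent to $W^c$ could be much smaller. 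The paper closes exactly this hole with two dynamical comparison lemmas: writing the Euler--Lagrange equations restricted to the center manifold as functional equations in the center coordinate $x$, it proves $k_1|\phi(x)| \le |\tilde\nabla V(g(x))| \le k_2|\phi(x)|$ and $l_1|\phi(x)| \le |\hat\nabla W(x)| \le l_2|\phi(x)|$, whence $|\hat\nabla W(x)| \ge (l_1/k_2)\,|\tilde\nabla V(g(x))|$ and the ambient \L{}ojasiewicz inequality transfers to $W$. This is the real content of the degenerate case; your closing paragraph correctly senses that the slaving of velocity to configuration variables is the issue, but you do not supply the mechanism.

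Two smaller points. First, the gradient-like structure you invoke (with the radial velocity $v$ as Lyapunov function on the collision manifold) is not the one the paper uses; the relevant statement is that the \emph{reduced} flow on the center manifold has the form $x' = -k\hat\nabla W(x) + o(|\hat\nabla W(x)|)$ in the pulled-back Fubini--Study metric, and it is $W$, not $v$, that plays the role of the \L{}ojasiewicz function. (It helps here that the center subspace contains only configurational directions $\delta s$ with $\delta w = 0$, which your proposal leaves open as a possible complication.) Second, exponential decay of the transverse component $z(\tau)$ does not by itself give $\int\|\dot z\|\,d\tau<\infty$; you also need the derivative to decay, which the paper extracts from the smoothness of the vector field together with the center-stable foliation. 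Neither of these is fatal, but the analyticity issue and the missing comparison lemmas together constitute the core of the argument you have left unaddressed.
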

This will imply Theorem~\ref{th_nospin} since the finite arclength implies that the integral giving the change in $\t$ as $\tau\into\infty$ is finite.

Several previous works have claimed to solve this problem by estimating the rotational component of the velocity.  But this does not take into account the ``falling cat'' phenomenon where rotation is produced by changes in shape even though the rotational component of the velocity is zero.  We have included an Appendix which contains a fuller discussion of this issue.  Our approach is based on combining a study of the flow on the center manifold of a degenerate restpoint with an integral formula for the change in angle.  
A recent preprint based on power series expansions for the flow on the center manifolds claims to solve the spin problem for one-dimensional and some two-dimensional center manifolds \cite{XiangYu}.

\section{Reduced Planar $n$-Body Problem and the Spin  Angle}
 Assume that the center of mass is at the origin and the total momentum is zero.    We can parametrize the $(2n-2)$-dimensional center of mass subspace, $\cW$, by a linear map $q=Pz$, $P:\R^{2n-2}\into \cW\subset \R^{2n}$ where $z=(z_1,\ldots,z_{n-1})\in \R^{2n-2}$, $z_i\in\R^2$.
For example, we could use relative positions $z_i= q_i-q_n$ with respect to the $n$-th body or generalized Jacobi variables.  

No matter how these coordinates are  chosen, the mutual distances will be expressible in terms of $z$ and (\ref{eq_U}) determines an analytic function  $U(z)$  on 
$\R^{2n-2}\setminus \Delta$ where $\Delta=\{z: r_{ij}=0\text{ for some }i\ne j\}$.
$-U(z)$ is the Newtonian potential energy function.  The moment of inertia and squared mass norm  $I(q)= \sum_i m_i|q_i|^2$ 
will become
$$I(z)  = \normtwo{z}^2=z^TMz$$
where $M$ is the $(2n-2)\times (2n-2)$ positive-definite symmetric matrix $P^T \diag(m_1,m_1,\ldots,m_n,m_n)P$.   

Let $\z=\dot z$ be the corresponding velocity variables.  Then  since $\dot  q=P\z$ the kinetic energy
$K=\fr12 \sum_i m_i |\dot q_i|^2$ becomes  
$$K(\z) = \fr12\normtwo{\z}^2=\fr12 \z^TM\z$$
Then the translation-reduced problem can be viewed as a Lagrangian system on the tangent bundle  $T(\R^{2n-2}\setminus\Delta)$ with Lagrangian $L = K(\z)+U(z)$.
The Euler-Lagrange equations are
$$
\begin{aligned}
\dot z&=\z\\
\dot \z &= M^{-1}\nabla U(z)
\end{aligned}
$$
and the total energy of  the  system  is
$$K(\z)-U(z) =\fr12\normtwo{\z}^2-U(z) = h.$$

Next we will introduce new coordinates which represent the size, rotation angle and shape of the configuration.  To describe these,  it's convenient to view the   positions and velocities as complex numbers, so we have $z,\z \in\C^{n-1}$.  We will introduce a Hermitian mass metric on  $\C^{n-1}$
$$\cmetrictwo{v,w} = \overline{v}^T M w\qquad v, w\in\C^{n-1}\simeq \R^{2n-2}$$
where $\overline{v}$ is the complex conjugate of $v$.
The real part $\metrictwo{v,w} =\re\cmetrictwo{v,w}$ will be called the mass inner product.  The corresponding norm is just our mass norm  $\normtwo{v}$.
The imaginary part $\im\cmetrictwo{v,w}$, a nondegenerate antisymmetric bilinear form, will also be useful below.

Let $r=\normtwo{z}$ and define the normalized configuration $\hat z  = z/r\in\S^{2n-3}$.  Then we will have
\begin{equation}\label{eq_constraints}
\normtwo{\hat z}=1\qquad \metrictwo{\hat z,\hat{\z}} = 0
\end{equation}
where $\hat\zeta  =\dot {\hat z}$.  To introduce an explicit rotation angle, restrict to the  open subset of $\S^{2n-3}$ where $\hat z_{n-1}\ne 0$ and introduce polar coordinates such that $\hat z_{n-1}=ke^{i\t}$ where $k>0$.   Note that the angle  is chosen so that $\t=0$ represents a state with $z_{n-1}$  in the  positive real axis.  This represents a choice of a  local section for the rotation group action.  Different choices for the angular variable would differ only by a constant shifts on each rotation group orbit and would not affect the question of convergence.

Define new variables $s_i = z_i/z_{n-1} = \hat z_i/\hat z_{n-1}$.   Then  
$$\hat z = ke^{i\t}(s_1,\ldots,s_{n-1},1)= ke^{i\t}(s,1)\qquad s\in\C^{n-2}.$$
Since  $\normtwo{\hat z}=1$  we  have $k = \normtwo{(s,1)}^{-1}$ and our coordinate change is given by $\psi:\R^+\times\S^1\times  \C^{n-2}\into \C^{n-1}$
\beq
z=\psi(r,\t,s) = r \,e^{i\t}\frac{(s,1)}{\normtwo{(s,1)}}.
\Leq{loc section}
Differentiating, we compute that
\beq
\dot z = (\dot r + \dot k r + i\dot \t  r )e^{i\t} k (s, 1) +re^{i\t}k (\dot s, 0)
\Leq{eq: vel}
where we continue to use $k = \normtwo{(s,1)}^{-1}$ from which the chain rule yields 
$$\dot k  = - \frac{G(s,\dot s)}{\normtwo{(s,1)}^3},  \text{ and }  G(s,\dot s) = \metrictwo{(s,1),(\dot s,0)}.$$
Introduce  velocity variables  $\z = \dot z$,   $\r=\dot  r, \w=\dot s$.  Also introduce  
$$\Omega(s, \w) = \im\cmetrictwo{(s,1), (\w, 0)}$$
and  write $\normtwo{\w}$ instead of  $\normtwo{(\w,0)}$.
Then simplifying  we get  the Lagrangian to be
$$L = \fr12\rho^2+\fr12 r^2\dot\t^2 +\frac{r^2\normtwo{\w}^2}{2\normtwo{(s,1)}^2} - \frac{r^2G(s,\w)^2}{2\normtwo{(s,1)}^4} +\frac{r^2\dot\t \,\Omega(s, \w)}{\normtwo{(s,1)}^2} + \frac1r V(s)
$$
where 
$$V(s) =\normtwo{(s,1)}\,U(s,1).$$
The first five terms of the  Lagrangian $L$ are just  the kinetic energy  
 $\frac{1}{2} \normtwo{\z}^2$ rewritten in terms of our new variables.
The formula relating $V$ and $U$   follows from the rotation invariance and homogeneity of the potential:
$$U(z) =U\left(\frac{re^{i\t}(s,1)}{\normtwo{(s,1)}}\right)) =  \frac{\normtwo{(s,1)}}{r}U (s,1)$$

The last step of the reduction uses that the Lagrangian is independent of $\t$ to eliminate $(\t,\dot \t)$.  First compute the angular momentum and solve for $\dot\t$:
\beq
\mu = L_{\dot\t} = r^2\dot\t+ \fr{r^2\Omega(s, \w)}{\normtwo{(s,1)}^2}\qquad \dot\t = \fr{\mu}{r^2} - \frac{\Omega(s, \w)}{\normtwo{(s,1)}^2}.
\Leq{eq: ang mom}
Then the reduced Lagrangian or Routhian is $R_\mu = L -\mu \dot\t$ where $\dot\t$ is to be eliminated in terms of $\mu$.  Carrying out this computation gives
$$R_\mu=\frac12\r^2+ \frac{r^2\normtwo{\w}^2}{2\normtwo{(s,1)}^2} - \frac{r^2G(s,\w)^2}{2\normtwo{(s,1)}^4}   -\frac12\left( \frac{r\Omega(s, \w)}{\normtwo{(s,1)}^2} - \frac{\mu}{r}\right)^2 +\frac1r V(s).$$

A  classical result shows that total collision is possible only when  $\mu=0$ and from now on we will concentrate on this case.  Then the Routhian becomes
$$R(r,\r,s,\w) = \frac12\r^2+\frac{r^2}{2}\normtwo{\w}^2_{FS} +\frac1r V(s)$$
where
\begin{equation}\label{eq_FSnorm}
\begin{aligned}
\normtwo{\w}_{FS}^2 &= \frac{\normtwo{(s,1)}^2\normtwo{(\w,0)}^2 - |\cmetrictwo{(s,1),(\w,0)}|^2}{(\normtwo{(s,1)}^2)^2}\\
& = \frac{\normtwo{\w}^2}{\normtwo{(s,1)}^2} - \frac{G(s,\w)^2+\Omega(s, \w)^2}{(\normtwo{(s,1)}^2)^2}.
\end{aligned}
\end{equation}
We will also use the notation $F(s,\w)=\normtwo{\w}^2_{FS}$.  It is the local coordinate representation of the square of the Fubini-Study norm on the complex projective space $\CP(n-2)$ induced by the  mass norm on $\C^{n-1}$.  There is also a corresponding Fubini-Study metric.  

Since we are using local coordinates we have a Lagrangian system on the tangent bundle of $\R^+\times\C^{n-2}\setminus \Delta$ or $\R^+\times\R^{2n-4}\setminus \Delta$.
Reverting to real coordinates, we can write the Fubini-Study norm as $F(s,\w) = \w^TA(s) \w$ where $A(s)$ is a positive-definite $(2n-4)\times (2n-4)$ matrix.
Then we have
\begin{equation}\label{eq_ELreduced}
\begin{aligned}
\dot r&=\r\\
\dot \r&=rF(s,w)-\frac1{r^2}V(s)\\
\dot s &=\w\\
\dot \w &=\frac{1}{r^3}A^{-1}(s)\nabla V(s) -\frac{2\r \,\w }{r}+ \frac12A^{-1}(s)\nabla F(s,\w)-A^{-1}(s)\dot A(s)\w\end{aligned}
\end{equation}
where $\nabla$ denotes the Euclidean gradient or partial gradient with respect to $s$.
The total energy of  the  system  is
$$\frac12\r^2+\frac{r^2}{2}F(s,\w) -\frac1r V(s) = h.$$
The last equation in (\ref{eq_ELreduced}) follows from the Euler-Lagrange equation $(R_\w)^\cdot =  R_s$.  We have $R_\w = r^2A(s)\w$ so
$$(R_\w)^\cdot  = r^2A(s)\dot \w + r^2 \dot A(s)\w + 2r\rho\, (A(s)\w).$$

The equation for $\dot\w$ can be written more concisely if we make use of  gradients and covariant derivatives with respect to the Fubini-Study metric.  Let 
$\tilde\nabla  = A^{-1}(s)\nabla $ denote the gradient or partial gradient with respect to the Fubini-Study metric and let $\tilde D_t$ denote the covariant time derivative of a vectorfield along a curve $s(t)$.  Then we have
$$\begin{aligned}
\tilde D_t \w &= \dot \w - \frac12A^{-1}(s)\nabla F(s,\w)+A^{-1}(s)\dot A(s)\w\\
&=\dot \w - \frac12\tilde\nabla F(s,\w)+A^{-1}(s)(DA(s)(\w))\w
\end{aligned}
$$
and the last Euler-Lagrange equation simplifies to
\begin{equation}\label{eq_lastEL}
\tilde D_t \w = \frac{1}{r^3}\tilde\nabla V(s)-\frac{2\r \,\w }{r}.
\end{equation}

Even though we eliminated the angle $\t$, we want to study its behavior for solutions approaching the total collision singularity.  Although the angular momentum is zero, $\t(t)$ need not be constant.  In fact we have
\beq
\dot\t =- \frac{\Omega(s,\w)}{\normtwo{(s,1)}^2}.
\Leq{horiz lift}
It's conceivable that as the shape $s(t)$ changes, the integral of $\dot\t(t)$ could diverge producing ``infinite spin' (see the Appendix for a fuller explanation).   We will show that this does not happen.

\section{Collision Manifold and Center Manifold}
To study orbits converging to total collision we will use McGehee's blowup method.  Namely, introduce rescaled velocity variables $v=\sqrt{r}\r, w=r^\fr32 \w$ and a new time variable $\tau$ such that ${}'=r^\fr32 {}^\cdot$.   The blown-up differential equations are
\begin{equation}\label{eq_blowup}
\begin{aligned}
r'&=vr \\
v'&= \fr12 v^2+F(s,w)-V(s)\\
s'&=w\\
\tilde D_\tau  w &= \tilde\nabla V(s)-\frac12 vw \end{aligned}
\end{equation}
where
$$\tilde D_\tau w = w' - \frac12\tilde \nabla F(s,w)+A^{-1}(s)(DA(s)(w))w.$$
The energy equation is
$$\fr12v^2+ \fr12 F(s,w)-V(s)=rh.$$

The collision manifold $\{r=0\}$ is invariant.  The equilibrium points are of the form $P = (r,v,s,w) = (0,v_0,s_0,0)$ where $\tilde\nabla V(s_0)=0$.  The last equation, which is equivalent to $\nabla V(s)=0$, characterizes the reduced central configurations.  If $q(t)$ is a total collision solution with $t\nearrow T$ then the corresponding solution
$\g(\tau) =(r(\tau),v(\tau),s(\tau),w(\tau))$ converges to a compact subset of the set of equilibrium points in some level set $v=v_0<0$.  The energy equation shows that $v_0=-\sqrt{2V(s_0)}$.  Since $r'=vr$ and $v(\tau)\into v_0<0$, it follows that $r(\tau)$ converges to 0 exponentially.

Suppose $P$ is an isolated equilibrium point with $v_0<0$ which is one of the limit points of such a total collision solution, $\g(\tau)$.   
Since $P$ is isolated we have $\g(\tau)\into P$ as $\tau\into\infty$.   We want  to show that  the  Fubini-Study arclength $L(s)$ is finite, where 
\begin{equation}\label{eq_arclength}
L(s) = \int \normtwo{s'(\tau)}_{FS}\,d\tau= \int \normtwo{w(\tau)}_{FS}\,d\tau.
\end{equation}

The linearized differential equations  at $P$ have matrix
$$ 
\m{\delta r'\\ \delta v'\\ \delta s'\\ \delta w'}=\m{v_0&0&0&0 \\ 0&v_0& 0&0\\0&0&0& I \\ 
0&0&D\tilde\nabla V(s_0)&-\fr12 v_0 I}\m{\delta r\\ \delta v\\ \delta s\\ \delta w}.
$$
Since $\nabla V(s)=0$, it follows that  that $D\tilde\nabla V(s_0)= D(A^{-1}(s)\nabla V(s)) = A^{-1}(s)D\nabla V(s)$.

The tangent space to the  energy manifold is given by $v_0\delta v=h\delta r$ so the upper left  $2\times 2$  block gives rise to a single eigenvalues $v_0<0$.  This corresponds to the exponential convergence of $r(\tau)$ to 0.  Let $B$ be the lower right $(4n-8)\times (4n-8)$ block, representing the linearized differential equations within the collision manifold.  If $\delta s$ satisfies $D\tilde\nabla V(\s_0)\delta s = c\, \delta s$ then $(\delta s, \delta w) =(\delta  s, \l_\pm\delta s)$ is an eigenvector of $B$ with  eigenvalue
$$\l_\pm =\fr{-v_0\pm \sqrt{v_0^2+16 c}}{4}.$$
Since  $v_0<0$, it follows that any nonreal eigenvalues are unstable.  Also $\re \l_+>0$ and we have $\l_-=0$ if and only if $c=0$.

If the matrix $D\nabla V(s_0)$ is nonsingular,  then the corresponding restpoint $P$ is  hyperbolic and therefore isolated.  Any solutions approaching $P$ as $\tau\into\infty$ are in the stable manifold  and converge exponentially fast.  From this, it follows that the integrand of the arclength integral (\ref{eq_arclength}) converges to 0 exponentially and therefore $L(s)<\infty$.  Since the angular momentum is zero, the spin angle $\t(\tau)$ satisfies 
\begin{equation}\label{eq_thetaprime}
\t'= - \frac{\Omega(s, w)}{\normtwo{(s,1)}^2}.
\end{equation}
This admits an estimate of the form $|\t'|\le K\normtwo{w}_{FS}$ so $\t(\tau)$ also converges  to a limit as $\tau\into\infty$.  This proves Theorems~\ref{th_nospin}  and \ref{th_finitearclength} when the omega limit set contains a nondegenerate CC.

Suppose now that there is a degenerate CC with corresponding restpoint $P$ in the limit set.  By choice of local coordinates, it's possible to assume that $s_0 = 0\in \R^{2n-4}$.  This is equivalent to choosing the unreduced coordinates $z \in \C^n$ so that the coordinates of the chosen central configuration are of the form $z_0 = (0,\ldots, 0,z_n)$, which is easily arranged by a change of complex basis.  We will need to use the center manifold and center-stable foliation for which we refer to \cite{Bressan, Fenichel, Wiggins}.  Modify the differential equations by introducing a cutoff function so that the new differential equations are linear outside of some neighborhood of the origin.  The modified equations have invariant center-stable, center-unstable and center manifolds, tangent to the corresponding subspaces.   These are unique but depend on the choice of cutoff function. However, the  solutions  of interest  will be  contained in the  center-stable manifold no  matter how the cutoff is done.   Furthermore, the center-stable manifold $W^{cs}$ is foliated over the center manifold $W^c$ so that solutions in $W^{cs}$ approach the corresponding solution in $W^c$ exponentially.  

The rest  of the proof will focus on estimating the arclength of solutions $\g(\tau)$ which start close to $P$ and lie in the center manifold $W^c(P)$.  Later  it will be  straightforward to handle solutions in the center-stable manifold.  One technical problem is that center manifolds may only have finite smoothness even though the original differential equations are real analytic.  

\section{Flow on the Center Manifold}
From the discussion of eigenvalues above, the center subspace is
$$\E^c = \{(\delta r,\delta v,\delta s,\delta w)= (0,0,\delta s,0): D\nabla V(s_0)\delta s = 0\}.$$
Let $\cK\subset \R^{2n-2}$ be the kernel of $D\nabla V(0)$.  Suppose it has dimension $k>0$ and suppose the $s$-coordinates are chosen so that 
$\cK=\R^k\times \{0\}$.  Then write $s=(x,y) \in\R^k\times \R^l$, $l=2n-4-k$ and similarly split $w$ as $w=(\x,\y)$.  The center  manifold will take the form of a graph
\begin{equation}\label{eq_centermanifold}
y= f(x)\qquad \x =\phi(x)\qquad \y=\psi(x)
\end{equation}
where $f(0)=0, Df(0)=0$ and similarly for $\phi, \psi$.   Since the collision manifold and energy manifolds  are invariant  we can assume that our center manifold also satisfies
$$r=0\qquad v(x) = -\sqrt{2V(s(x))-\normtwo{w(x)}^2}.$$

Since the center manifold is invariant, the parametrization (\ref{eq_centermanifold}) can be used to pull-back the restriction of the blown-up Euler-Lagrange equations (\ref{eq_blowup}) to get a first-order  differential equation on some neighborhood $\cU$ of the origin in $\R^k$.   If $x(\tau)$ is a solution of this pull-back equation then 
$$s(\tau) = (x(\tau),f(x(\tau)))\qquad w(\tau) = (\phi(x(\tau)),\psi(x(\tau)))$$
is  a solution  of  (\ref{eq_blowup}).  It follows  that
$$x'=\xi  =\phi(x)\qquad  y'=\eta  =\psi(x) = Df(x)\phi(x).$$
These are to hold along every  solution curve  $x(\tau)$ but since  there is a solution curve through every point $x\in \cU$, they can be regarded as functional equations satisfied by $f,\phi,\psi$.  In particular, our pull-back differential equation is just $x'=\phi(x)$.  

If we introduce  the notation  $g(x)=(x,f(x))$ the functional equation for $\psi$ shows  that the  parametrization of the center manifold (\ref{eq_centermanifold})  can be written 
$$s = g(x)\qquad w = (\phi(x),\psi(x)) = Dg(x)\phi(x).$$
Let $x(\tau)$ be any solution of the pull-back differential $x'=\phi(x)$  and let $w(\tau) = Dg(x)\phi(x)|_{x=x(\tau)}$.  Then the chain rule gives
$$w'(\tau) = Dg(x)D\phi(x)\phi(x) + D(Dg(x))(\phi(x))\phi(x)$$
where $x=x(\tau)$ and the covariant derivative in (\ref{eq_blowup}) takes the form
$$\tilde D_\tau w(\tau) = w'(\tau) - \frac12\tilde \nabla F(g(x),w(x))+A^{-1}(g(x))(DA(g(x))(w(x)))w(x)$$
where $w(x) = Dg(x)\phi(x)$ and $x=x(\tau)$.  Substituting all this into (\ref{eq_blowup}) gives an estimate which will be needed later.
\begin{lemma}\label{lemma_nablaVestimate}
In a sufficiently small neighborhood $\cU$ of the origin in $\R^k$, there are positive constants $k_1,k_2$ such that
\begin{equation}\label{eq_gradVestimate}
k_1 |\phi(x)|\le  |{\tilde\nabla V(g(x))}|\le k_2 |\phi(x)|.
\end{equation}
\end{lemma}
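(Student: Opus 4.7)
The plan is to exploit the invariance of the center manifold: substituting $s = g(x)$, $w = Dg(x)\phi(x)$ and $v = v(x)$ into the fourth line of (\ref{eq_blowup}) produces, after rearrangement, the pointwise identity
\begin{equation*}
\tilde\nabla V(g(x)) \;=\; \tilde D_\tau w(x) \;+\; \tfrac12\, v(x)\, w(x), \qquad x\in \cU,
\end{equation*}
where $\tilde D_\tau w(x)$ is computed along the pull-back flow $x' = \phi(x)$. Once this identity is in hand, the task reduces to showing that the first term on the right is of strictly smaller order than the second, which I will show is $\Theta(|\phi(x)|)$.

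The key structural input is the tangency of $W^c$ to $\E^c$, which the paper already records as $\phi(0) = 0$, $D\phi(0) = 0$ (and analogously for $f, \psi$). Therefore $\phi(x) = O(|x|^2)$ while $D\phi(x) = O(|x|)$. Combined with the fact that $Dg(0) = \binom{I}{0}$ has rank $k$ and $v(0) = v_0 < 0$, on a possibly smaller neighborhood one has
\begin{equation*}
c_1|\phi(x)| \;\le\; |w(x)| \;\le\; c_2|\phi(x)|, \qquad \tfrac12|v_0| \;\le\; |v(x)| \;\le\; 2|v_0|,
\end{equation*}
so the term $\tfrac12 v(x)\, w(x)$ has magnitude exactly of order $|\phi(x)|$.

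It remains to bound $\tilde D_\tau w = w' - \tfrac12 \tilde\nabla F(g(x), w) + A^{-1}(g(x))(DA(g(x))(w))w$. The two non-derivative pieces are quadratic in $w$, hence $O(|\phi(x)|^2) = O(|x|^4)$. For $w'$, the chain rule along $x' = \phi(x)$ gives
\begin{equation*}
w'(x) \;=\; Dg(x)\, D\phi(x)\, \phi(x) \;+\; D^2 g(x)(\phi(x), \phi(x)),
\end{equation*}
and the extra factor $D\phi(x) = O(|x|)$ in the first term, together with $\phi(x) = O(|x|^2)$ in both, forces $w'(x) = O(|x|^3)$. Altogether $|\tilde D_\tau w(x)| = O(|x|^3) = o(|\phi(x)|)$, so the triangle inequality applied to the identity above yields the desired constants $k_1, k_2$ on a sufficiently small $\cU$.

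The only real obstacle is the vanishing of $D\phi(0)$: if $D\phi(0)$ were nonzero, then $w'$ would be of the same order as $w$, and nothing would prevent a leading-order cancellation between $\tilde D_\tau w$ and $\tfrac12 v_0 w$ that would destroy the lower bound. The graph structure of $W^c$ over the center subspace $\E^c$, already invoked by the paper, is precisely what supplies this vanishing and makes the argument go through.
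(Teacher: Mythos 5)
Your overall strategy is exactly the paper's: use invariance of the center manifold to turn the last line of (\ref{eq_blowup}) into the pointwise identity $\tilde\nabla V(g(x)) = \tilde D_\tau w(x) + \tfrac12 v(x)\,Dg(x)\phi(x)$, observe that the second term is comparable to $|\phi(x)|$ from above and below (since $Dg(x)$ is injective with bounded norm and $v(x)\to v_0<0$), and show that the first term is of strictly smaller order. The ingredients you list for bounding $\tilde D_\tau w$ are also the right ones.

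The one step that does not survive scrutiny is the final inference ``$|\tilde D_\tau w(x)| = O(|x|^3) = o(|\phi(x)|)$''. The second equality requires a lower bound on $|\phi(x)|$ by a power of $|x|$, and no such bound is available: $\phi$ vanishes to order at least two at the origin but may vanish to much higher order (or be flat), which is precisely the degenerate situation this lemma must handle. If, say, $|\phi(x)|\sim |x|^5$, then a quantity of size $|x|^3$ is huge compared with $|\phi(x)|$ and the triangle inequality gives nothing. The repair is to compare each term of $\tilde D_\tau w$ against $|\phi(x)|$ itself rather than against powers of $|x|$, which is what the paper does: the two non-derivative pieces and $D^2g(x)(\phi(x),\phi(x))$ are bounded by $C|\phi(x)|^2 \le C\sup_{\cU}|\phi|\cdot |\phi(x)|$, while $Dg(x)D\phi(x)\phi(x)$ is bounded by $C\sup_{\cU}|D\phi|\cdot |\phi(x)|$; since $\phi(0)=0$, $D\phi(0)=0$ and both are continuous, both suprema can be made smaller than any prescribed $c>0$ by shrinking $\cU$, giving $|\tilde D_\tau w(x)|\le c|\phi(x)|$ directly, and taking $c$ below the lower constant for $\tfrac12|v(x)Dg(x)\phi(x)|$ finishes the proof. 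With that substitution your argument coincides with the paper's, and as a bonus it no longer uses $\phi(x)=O(|x|^2)$ (hence no $C^2$ regularity of the center manifold), only $C^1$ with $D\phi(0)=0$.
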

\begin{proof}
Let $x(\tau)$ be any solution of  $x'=\phi(x)$ and $w(\tau) = Dg(x)\phi(x)$.  Then (\ref{eq_blowup}) gives 
\begin{equation}\label{eq_Dtauw}
\tilde D_\tau w =\tilde\nabla V(g(x)) - \fr12 v(x)Dg(x)\phi(x)
\end{equation}
where $v(x) = -\sqrt{2V(g(x))-\normtwo{Dg(x)\phi(x))}^2}$ and $x=x(\tau)$.  The equations for $w'(\tau)$  and $D_\tau w(\tau)$ above give a formula for the left-hand side of  (\ref{eq_Dtauw}) in terms of $g(x(\tau))$  and $\phi(x(\tau))$.  Since  there is a solution $x(\tau)$ through any given $x\in \cU$ we can obtain a functional equation in $x$.  
Note that most of the terms on the left-hand  side depend quadratically on $\phi(x)$  with  one exceptional  term depending on $D\phi(x)\phi(x)$.  
Recall that $\phi(0)=0$ and $D\phi(0) = 0$ and observe that the matrix norm $|Dg(x)|$ is bounded.  It follows that given any constant $c>0$ we have an estimate of the form  
$|\tilde D_\tau w| \le c|\phi(x)|$ in every sufficiently small neighborhood of the origin. 

Since $v(x)\simeq -\sqrt{2V(0)} <0$, the last term in (\ref{eq_Dtauw}) will satisfy $c_1|\phi(x)| \le |\fr12 v(x)Dg(x)\phi(x)| \le c_2|\phi(x)|$ for some positive constants  $c_1,c_2$.  Then  we  have
$$(c_1-c)|\phi(x)| \le |\tilde \nabla  V(g(x))| \le (c_2+c)|\phi(x)|.$$
Working in a sufficiently small neighborhood  we can get $c<c_1$ to complete the proof.
\end{proof}

The functional  equation (\ref{eq_Dtauw})  involves the Fubini-Study gradient $\tilde\nabla V(s)$ in $\R^{2n-2}$, evaluated at $s=g(x)$.  The next step is to reformulate this equation in terms of a k-dimensional gradient of the function $W(x) = V(g(x))$.  Let $X$ denote the projection of the center manifold to configuration space.  The function $s=g(x)= (x,f(x))$ is an immersion which parametrizes $X$  so we can use $g$ to pull-back the Fubini-Study metric to a Riemannian metric on $X$.  Denote the squared norm of this metric by $\normtwo{\xi}^2_X = \hat F(x,\xi)$  where $x,\xi\in\R^{k}$.  Then
$$\normtwo{\xi}^2_X = \hat F(x,\xi) = F(g(s),Dg(x)\xi) = \normtwo{Dg(x)\xi}_{FS}^2$$
where $F(s,w)$  is the squared Fubini-Study norm  (\ref{eq_FSnorm}).
Explicitly, we have $\hat F(x,\xi) = \xi^T B(x) \xi$ where $B(x)$ is the $k\times k$ matrix
$$\hat A(x) = Dg(x)^T A(g(x)) Dg(x)\qquad  Dg(x) =  \m{I \\Df(x)}$$
and $A(s)$ is the matrix for the Fubini-Study metric.

The pull-back metric will determine a covariant derivative which we will denote by $\hat D$. 
Now let $x(\tau)$ be curve in $X$ and $\xi(\tau)$ a  vectorfield along  $x(\tau)$.  There will be a corresponding curve and vectorfield in  $\R^{2n-4}$ given  by
$s(\tau) = g(x(\tau))$, $w(\tau) =  Dg(x(\tau))\xi(\tau)$.  Since $g$ is an isometric immersion, it can be shown that the covariant derivatives $\hat D_\tau \xi$ in $X$ and $\tilde D_\tau w$ in $\R^{2n-4}$ are related by
$$\pi(\tau)\tilde D_\tau w(\tau) = Dg(x(\tau)) \hat D_\tau  \xi(\tau)$$
where $\pi(\tau)$ is the orthogonal projection from $T_{s(\tau)}\R^{2n-2}\into \im(Dg(x(\tau))$ with respect to the Fubini-Study metric.  We can also pull-back the potential function  $V(s)$ to the function $W(x) = V(g(x))$.  If $\hat\nabla$ denotes the gradient with respect to the pull-back metric  on $X$, we have
$$\pi(\tau)\tilde\nabla V(s(\tau)) = Dg(x(\tau))\hat\nabla W(x(\tau)).$$

Let $x(\tau)$ be any solution of  $x'=\phi(x)$ and let $\xi(\tau) = x'(\tau) = \phi(x(\tau))$.  Then $s(\tau) = g(x(\tau))$, $w(\tau) =Dg(x(\tau))\xi(\tau)$ satisfies (\ref{eq_Dtauw}).
Applying the orthogonal projection $\pi(\tau)$ to both sides of (\ref{eq_Dtauw}) gives
$$Dg(x)\hat D_\tau  \xi = Dg(x )\hat\nabla W(x)-\frac12 v Dg(x)\xi.$$
Since $g$ is an immersion, we conclude that $x(\tau),\xi(\tau)$ satisfies
\begin{equation}\label{eq_Dtauxi}
\hat D_\tau \xi = \hat\nabla W(x) - \fr12 v\phi(x).
\end{equation}
Since  $\xi'(\tau)=D\phi(x(\tau))\phi(x(\tau))$, the explicit formula for the covariant derivative $\hat D$ is
$$\hat D_\tau \x = D\phi(x)\phi(x) - \frac12\hat \nabla \hat F(x,\xi)+{\hat A}^{-1}(g(x))(D{\hat A}(x)(\xi)\xi$$
where $\xi =  \phi(x)$ and $x=x(\tau)$.  As above, we can replace $x(\tau)$ by $x$ and view this as a functional equation.  Then an argument analogous to the proof of Lemma~\ref{lemma_nablaVestimate} gives
\begin{lemma}\label{lemma_nablaWestimate}
In a sufficiently small neighborhood $\cU$ of the origin in $\R^k$, there are positive constants $l_1,l_2$ such that
\begin{equation}\label{eq_gradWestimate}
l_1 |\phi(x)|\le  |{\hat\nabla W(x)}|\le l_2 |\phi(x)|.
\end{equation}
\end{lemma}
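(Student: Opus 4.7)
The plan is to mirror the proof of Lemma~\ref{lemma_nablaVestimate} exactly, using the functional equation (\ref{eq_Dtauxi}) in place of (\ref{eq_Dtauw}). First I would solve (\ref{eq_Dtauxi}) for the quantity of interest,
$$\hat\nabla W(x) = \hat D_\tau \xi + \tfrac12 v(x)\phi(x),$$
and then estimate each term on the right in terms of $|\phi(x)|$, with $\xi=\phi(x)$.

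For the covariant-derivative term, I would substitute $\xi=\phi(x)$ into the explicit formula
$$\hat D_\tau \xi = D\phi(x)\phi(x) - \tfrac12\hat\nabla\hat F(x,\phi(x)) + \hat A^{-1}(x)\bigl(D\hat A(x)(\phi(x))\bigr)\phi(x).$$
The two Christoffel-type terms are quadratic in $\phi(x)$, since $\hat F$ is quadratic in its velocity argument and since $\hat A$ and $D\hat A$ remain bounded near the origin; both therefore contribute $O(|\phi(x)|^2)$. The first term is bounded by $|D\phi(x)|\,|\phi(x)|$, and because $D\phi(0)=0$ the prefactor $|D\phi(x)|$ can be made arbitrarily small by shrinking $\cU$. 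Consequently, for any prescribed constant $c>0$ one has $|\hat D_\tau \xi|\le c|\phi(x)|$ throughout a sufficiently small $\cU$.

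For the remaining term I would use that, on the center manifold, $v(x)=-\sqrt{2V(g(x))-\normtwo{Dg(x)\phi(x)}^2}$, so $v(x)\to -\sqrt{2V(0)}<0$ as $x\to 0$. Hence in a small enough neighborhood there are constants $c_1,c_2>0$ with $c_1|\phi(x)|\le |\tfrac12 v(x)\phi(x)|\le c_2|\phi(x)|$. The triangle inequality together with the previous bound yields
$$(c_1-c)|\phi(x)|\le |\hat\nabla W(x)|\le (c_2+c)|\phi(x)|,$$
and choosing $c<c_1$ produces the asserted constants $l_1=c_1-c$ and $l_2=c_2+c$. The only potential subtlety is verifying that $\hat A(x)=Dg(x)^T A(g(x))Dg(x)$ is smooth and positive-definite on $\cU$ so that $\hat D_\tau\xi$ and $\hat\nabla\hat F$ are well defined; this is automatic because $g$ is an immersion and $A$ is positive-definite, so the argument goes through with no further obstacle.
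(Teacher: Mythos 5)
Your proposal is correct and is precisely the argument the paper intends: the paper's own "proof" of this lemma consists of the single remark that an argument analogous to that of Lemma~\ref{lemma_nablaVestimate} applies, with the functional equation (\ref{eq_Dtauxi}) playing the role of (\ref{eq_Dtauw}), and you have carried out exactly that analogy (bounding $\hat D_\tau\xi$ by $c|\phi(x)|$ via the quadratic Christoffel terms and the $D\phi(x)\phi(x)$ term with $D\phi(0)=0$, then comparing $|\tfrac12 v(x)\phi(x)|$ to $|\phi(x)|$ using $v(0)=-\sqrt{2V(0)}<0$). No gaps; your closing remark about $\hat A(x)$ being positive-definite because $g$ is an immersion is also consistent with the paper's setup.
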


We can also use (\ref{eq_Dtauxi}) to see that the differential equation on the center manifold is approximately a gradient.  This will be the key to ruling out infinite spin.
\begin{lemma}
The differential equation on the center manifold is $x' = \phi(x)$ where  $\phi(x) = -k\hat\nabla  W(x) + \g(x)$ where $k =-2/v(0)>0$ and $\g(x) = o(|\hat \nabla W(x)|)$.
\end{lemma}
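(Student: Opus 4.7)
The plan is to read off the decomposition directly from the functional equation (\ref{eq_Dtauxi}), solving algebraically for $\phi(x)$ in terms of $\hat\nabla W(x)$ and then bounding the remainder using the facts that $\phi(0)=0$, $D\phi(0)=0$, and $v(x)\to v(0)<0$.

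First, I would recall the explicit formula for $\hat D_\tau\xi$ given just before Lemma \ref{lemma_nablaWestimate}:
\[
\hat D_\tau \xi = D\phi(x)\phi(x) - \tfrac12 \hat\nabla \hat F(x,\xi) + \hat A^{-1}(g(x))(D\hat A(x)(\xi))\xi,\quad \xi=\phi(x).
\]
Each of these three terms is $o(|\phi(x)|)$ as $x\to 0$. Indeed, the last two are quadratic in $\xi=\phi(x)$ (coming from $\hat F(x,\xi)=\xi^T\hat A(x)\xi$), hence bounded by a constant times $|\phi(x)|^2$, which is $o(|\phi(x)|)$ since $\phi(x)\to 0$. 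The first term is bounded by $|D\phi(x)|\,|\phi(x)|$, and since $D\phi(0)=0$, the factor $|D\phi(x)|\to 0$, so this term is again $o(|\phi(x)|)$.

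Next, I would substitute this estimate $\hat D_\tau\xi = o(|\phi(x)|)$ into equation (\ref{eq_Dtauxi}),
\[
\hat D_\tau \xi = \hat\nabla W(x) - \tfrac12 v(x)\phi(x),
\]
and solve for $\phi(x)$:
\[
\phi(x) = \frac{2}{v(x)}\,\hat\nabla W(x) + o(|\phi(x)|).
\]
Since $v(x)$ is continuous with $v(0)=-\sqrt{2V(0)}<0$, writing $2/v(x) = -k + \epsilon(x)$ with $k=-2/v(0)>0$ and $\epsilon(x)\to 0$ gives
\[
\phi(x) = -k\,\hat\nabla W(x) + \epsilon(x)\hat\nabla W(x) + o(|\phi(x)|).
\]
Finally, I would invoke Lemma \ref{lemma_nablaWestimate}, which says $|\hat\nabla W(x)|$ and $|\phi(x)|$ are comparable, to convert the $o(|\phi(x)|)$ term into $o(|\hat\nabla W(x)|)$; the $\epsilon(x)\hat\nabla W(x)$ piece is manifestly $o(|\hat\nabla W(x)|)$. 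Setting $\gamma(x)$ equal to the sum of the two error terms completes the proof.

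The argument is essentially bookkeeping; the only subtle point — which I would not call a real obstacle — is making sure the $D\phi(x)\phi(x)$ term is genuinely $o(|\phi(x)|)$ rather than merely $O(|\phi(x)|)$. This relies crucially on the tangency condition $D\phi(0)=0$, which holds because the center manifold is tangent to $\E^c = \{\delta w=0\}$ at the origin, so both $\phi$ and $\psi$ have vanishing first derivatives there. Everything else is linear algebra combined with the two Lipschitz-type comparisons already established.
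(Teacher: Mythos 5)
Your proposal is correct and follows essentially the same route as the paper: both estimate the left-hand side of (\ref{eq_Dtauxi}) as $o(|\phi(x)|)$ using $\phi(0)=0$, $D\phi(0)=0$ and the quadratic dependence of the covariant-derivative correction terms on $\xi=\phi(x)$, then solve for $\phi(x)=\frac{2}{v(x)}\hat\nabla W(x)+o(|\phi(x)|)$ and use the comparison of Lemma~\ref{lemma_nablaWestimate} to convert the error into $o(|\hat\nabla W(x)|)$. Your treatment of $v(x)$ via continuity alone is a slight (harmless) simplification of the paper's expansion $v(x)=v(0)+O(|\hat\nabla W(x)|)+O(|\phi(x)|)$, and you spell out the $o(|\phi(x)|)$ estimate in more detail than the paper does.
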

\begin{proof}
The left-hand side of (\ref{eq_Dtauxi}) is $o(|\phi(x)|)$ so we can rearrange the equation to get
$$\phi(x) = \frac{2}{v(x)}\hat\nabla W(x) +o(|\phi(x)|).$$
Also  $v(x) = v(0) +O(|\hat\nabla W(x))+O(|\phi(x)|)$ and $|\phi(x)|\le l_2 |\hat\nabla W(x)|$  so
$$\begin{aligned}
\phi(x) &= -k\hat\nabla W(x) +O(|\hat\nabla W(x)|^2)+O(|\phi(x)||\hat\nabla W(x)|)+o(|\phi(x)|)\\
& = -k\hat\nabla W(x) +o(|\hat \nabla W(x)|).
\end{aligned}
$$
\end{proof}

The final ingredient is a \L{}ojasiewicz gradient inequality for $W(x)$.
\begin{lemma}
In a sufficiently small neighborhood of the origin, the restricted potential $W(x)$ satisfies  
\begin{equation}\label{eq_Loj}
|\hat\nabla W(x)|^2\ge |W(x)-W(0)|^\a
\end{equation} where $1<\a<2$.
\end{lemma}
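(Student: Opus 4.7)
\medskip
\noindent\textbf{Proof plan.} The central difficulty is that $W(x) = V(g(x))$ is assembled from the center-manifold parametrization $g$, which in general has only finite smoothness. This prevents a direct application of the \L{}ojasiewicz gradient inequality to $W$. My plan is to invoke the \L{}ojasiewicz inequality for the real-analytic function $V$ instead, and then transfer the conclusion to $W$ using the comparison estimates already in hand.

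The chosen central configuration $s_0=0$ is not a collision, so $V(s) = \normtwo{(s,1)}\,U(s,1)$ is real-analytic on an open neighborhood of the origin in $\R^{2n-4}$, with a critical point at $0$. The classical \L{}ojasiewicz gradient inequality then supplies constants $C_0>0$ and $\theta\in[\tfrac12,1)$ such that
\[
|\tilde\nabla V(s)|^2 \ge C_0\,|V(s)-V(0)|^{2\theta}
\]
for $s$ in a sufficiently small neighborhood of $0$. Setting $\alpha_0 = 2\theta\in[1,2)$ and evaluating at $s=g(x)$ gives $|\tilde\nabla V(g(x))|^2 \ge C_0\,|W(x)-W(0)|^{\alpha_0}$.

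Lemmas~\ref{lemma_nablaVestimate} and \ref{lemma_nablaWestimate} together say that both $|\tilde\nabla V(g(x))|$ and $|\hat\nabla W(x)|$ are comparable to $|\phi(x)|$, and hence to each other: concretely, $|\hat\nabla W(x)| \ge l_1|\phi(x)| \ge (l_1/k_2)\,|\tilde\nabla V(g(x))|$. Squaring and combining with the \L{}ojasiewicz bound for $V$ yields
\[
|\hat\nabla W(x)|^2 \ge C_1\,|W(x)-W(0)|^{\alpha_0}
\]
with $C_1 = (l_1/k_2)^2 C_0 > 0$. Equivalence of the ambient Euclidean, Fubini--Study, and pull-back norms on a bounded neighborhood is absorbed into these constants.

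Finally I absorb $C_1$ into the exponent. Pick any $\alpha\in(\max\{1,\alpha_0\},2)$; by continuity $|W(x)-W(0)|$ is as small as we like near the origin, so after shrinking the neighborhood we have $|W(x)-W(0)|^{\alpha-\alpha_0}\le C_1$, and thus $|W(x)-W(0)|^{\alpha} \le C_1\,|W(x)-W(0)|^{\alpha_0} \le |\hat\nabla W(x)|^2$. The only delicate point is conceptual rather than computational: the possible non-analyticity of $W$ is sidestepped because Lemmas~\ref{lemma_nablaVestimate}--\ref{lemma_nablaWestimate} make $|\hat\nabla W|$ comparable to the analytic quantity $|\tilde\nabla V\circ g|$, so the \L{}ojasiewicz behavior of $V$ transfers to $W$ without $g$ needing to be analytic.
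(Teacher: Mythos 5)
Your proposal is correct and follows essentially the same route as the paper: invoke the \L{}ojasiewicz inequality for the real-analytic function $V$, transfer it to $W$ via the comparability $|\hat\nabla W(x)|\ge (l_1/k_2)\,|\tilde\nabla V(g(x))|$ supplied by Lemmas~\ref{lemma_nablaVestimate} and \ref{lemma_nablaWestimate}, and absorb the multiplicative constant by slightly increasing the exponent on a smaller neighborhood. The paper's proof is the same argument in slightly more compressed form.
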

\begin{proof}
Since $V(s)$  is real analytic, it satisfies a \L{}ojasiewicz gradient inequality $|\nabla V(s)|^2\ge C |V(s)-V(0)|^\a$ in some neighborhood of  $s=0$ where $0<\alpha<2$  and $C>0$. Replacing $\nabla$ by $\tilde\nabla$ would just modify the constant $C$.   It is no loss of generality to assume $1<\a<2$ since when $|V(s)-V(0)|\le 1$, increasing the exponent makes the inequality weaker.  

Since  center manifolds are not necessarily analytic, we can't immediately get such an inequality for $W(x)= V(g(x))$.  However, when $s=g(x)$ we have $V(s)-V(0)  = W(x)-W(0)$.  It follows from Lemmas~\ref{lemma_nablaVestimate} and \ref{lemma_nablaWestimate}  that $|\hat \nabla W(x)| \ge K |\tilde \nabla V(g(x))|$ with  $K=l_1/k_2$.  This gives $|\hat \nabla W(x)|\ge KC|W(x)-W(0)|^\a$ and by a further increase of $\a$ we can arrange that  $KC=1$.
\end{proof}

\L{}ojasiewicz used his inequality to show that for analytic gradient differential equations, solutions converging to a critical point have finite arclength \cite{Loj, ColdingMinicozzi}.  The same property holds for our equation on the  center manifold.
\begin{lemma}
Suppose $x	(\tau)$ is a solution of a differential equation of the form $x'= -k \hat\nabla  W(x) + \g(x)$ where $k>0$ and $\g(x) = o(|\hat \nabla W(x)|)$ and suppose that 
$W(x)$ satisfies an inequality of the form  (\ref{eq_Loj}).  Suppose $x(\tau)$ is a solution with $x(\tau)\into 0$ as $\tau\into\infty$.  Then the arclength of the curve $x(\tau)$ is finite.  Here the gradient and arclength are taken with  respect to some smooth Riemannian metric.
\end{lemma}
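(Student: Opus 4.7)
The plan is to run the classical \L{}ojasiewicz finite-length argument, adapted to handle the lower-order perturbation $\gamma$. The strategy has three steps: (i) show that $W(x(\tau))$ is monotone decreasing along the flow with $\tfrac{d}{d\tau}W(x) \le -c\,|\hat\nabla W(x)|^2$; (ii) use (\ref{eq_Loj}) to turn this into a differential inequality for a fractional power of $u(\tau) := W(x(\tau)) - W(0)$ whose decay dominates $|\hat\nabla W(x(\tau))|$; (iii) conclude finite arclength using the bound $|x'(\tau)| \le C\,|\hat\nabla W(x(\tau))|$, which follows from the ODE together with the $o$-estimate on $\gamma$.

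For step (i) I would compute
\[
\tfrac{d}{d\tau}W(x(\tau)) = \langle \hat\nabla W(x), x'\rangle = -k\,|\hat\nabla W(x)|^2 + \langle \hat\nabla W(x), \gamma(x)\rangle
\]
and absorb the second term into a fraction of the first using $|\gamma(x)| = o(|\hat\nabla W(x)|)$, yielding $\tfrac{d}{d\tau}W(x(\tau)) \le -\tfrac{k}{2}|\hat\nabla W(x)|^2$ on a sufficiently small neighborhood of $0$. In particular $u(\tau) \ge 0$ is nonincreasing for large $\tau$ and tends to zero.

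For step (ii) I would set $\beta = 1 - \alpha/2 \in (0, 1/2)$ and differentiate $u^\beta$. A direct computation gives
\[
\tfrac{d}{d\tau}u^\beta = \beta\, u^{\beta-1} u' \le -\tfrac{k\beta}{2}\, u^{-\alpha/2}\,|\hat\nabla W|^2,
\]
and the \L{}ojasiewicz inequality (\ref{eq_Loj}), rewritten as $u^{-\alpha/2} \ge |\hat\nabla W|^{-1}$, converts this into $\tfrac{d}{d\tau}u^\beta \le -c'\,|\hat\nabla W(x(\tau))|$. Integrating from any starting time $\tau_0$ to $T$ and using $u \ge 0$ bounds $\int_{\tau_0}^T |\hat\nabla W(x(\tau))|\,d\tau$ by $u(\tau_0)^\beta/c'$, independent of $T$. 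Step (iii) is then immediate, because $|x'| \le (k + o(1))|\hat\nabla W|$ for $\tau$ large, so the arclength integral converges.

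The main technical obstacle is the possibility that $u(\tau^*) = 0$ at some finite $\tau^*$, which would make the fractional-power manipulation degenerate. Here the monotonicity from step (i) forces $|\hat\nabla W(x(\tau))| = 0$ identically on $[\tau^*,\infty)$, and then the $o$-estimate forces $\gamma(x(\tau)) = 0$ as well, so $x(\tau)$ is stationary past $\tau^*$ and contributes no further arclength; the argument on $[\tau_0, \tau^*)$ is handled by taking a limit. A secondary concern is choosing a single neighborhood of $0$ on which the $o$-absorption, the \L{}ojasiewicz inequality, and the bound $|x'| \le C|\hat\nabla W|$ all hold simultaneously, and verifying the orbit eventually enters and remains in it; this is routine given $x(\tau) \to 0$.
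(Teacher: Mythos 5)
Your proof is correct, and its skeleton (monotone decrease of $W$ along the flow, absorbing $\gamma$ into a fraction of the gradient term, reducing finite arclength to $\int|\hat\nabla W(x(\tau))|\,d\tau<\infty$ via $|x'|\le(k+c)|\hat\nabla W|$) matches the paper's. Where you genuinely diverge is in the central integration step. You run the classical \L{}ojasiewicz argument: differentiate $u^\beta$ with $\beta=1-\alpha/2>0$ and use $u^{-\alpha/2}\ge|\hat\nabla W|^{-1}$ to obtain $\frac{d}{d\tau}u^\beta\le -c'|\hat\nabla W|$, so that $\int_{\tau_0}^T|\hat\nabla W|\,d\tau\le u(\tau_0)^\beta/c'$ uniformly in $T$. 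The paper instead follows the Colding--Minicozzi route: it first integrates the differential inequality $W'\le -k_2W^\alpha$ to get the polynomial decay $W(\tau)\le W_0(1+\lambda\tau)^{-1/(\alpha-1)}$, then applies Cauchy--Schwarz with weights $\tau^{\pm(1+\epsilon)/2}$ to $\int\sqrt{-W'/k_2}$ and finishes with an integration by parts. Your route is shorter, needs only $\alpha<2$ rather than the normalization $1<\alpha<2$, and gives a cleaner explicit bound on the gradient integral; the paper's route produces an explicit decay rate for $W(\tau)$ as a byproduct, which is not needed for the lemma itself. Your explicit treatment of the degenerate possibility $u(\tau^*)=0$ (monotonicity forces $u\equiv0$, hence $|\hat\nabla W|\equiv0$ and, by the $o$-estimate, $x'\equiv0$ past $\tau^*$) addresses a point where the fractional power $u^{\beta-1}$ would otherwise be ill-defined; the paper's write-up passes over this silently, so it is a worthwhile addition.
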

\begin{proof}  
Without loss of generality we can assume that $W(0)=0$.   Given any constant $c>0$ we can work in a neighborhood $\cU$ of $x=0$ where (\ref{eq_Loj}) holds and such that 
$|\g(x)| \le c |\hat\nabla  W(x)|$.  We will assume that $c<k$.  
Since $x(\tau)\into 0$ we can assume that $x(\tau)\in\cU$ for all $\tau\ge 0$.

To estimate the arclength, note that  since
$|x'|\le k|\hat\nabla W(x)|+|\g(x)|\le (k+c)|\hat\nabla W(x)|$, it suffices to estimate the integral of $|\hat\nabla W(x(\tau))|$.  First consider $W(x(\tau))$.  We have
$$W' = -k|\hat\nabla W|^2 + \metric{\g,\hat\nabla W}\le -(k-c)|\hat\nabla W|^2\le -k_2 W^\a$$
where $k_2=k-c>0$.  The first inequality shows that $W'(\tau)\le 0$ and  since  $W(x(\tau))\into  W(0)=0$, we must have  $W(x(\tau))\ge 0$ for all $\tau\ge 0$.
This allows us  to drop the absolute value in (\ref{eq_Loj}) to obtain the second inequality.  

Integrating this over $[0,\tau]$ gives
\begin{equation}\label{eq_Woftau}
W(\tau)\le  \frac{W_0}{(1+\l \tau)^\fr{1}{\a-1}}
\end{equation}
where $W_0=W(x(0))$ and  $\l = k_2(\a-1)W_0^{\a-1}$.
Since $1<\a<2$, we have $\frac{1}{\a-1} = 1+2\e$ for some $\e>0$.  

Since $W'\le -k_2|\hat\nabla W(x)|^2$ we can use the Cauchy-Schwartz inequality in a tricky way \cite{ColdingMinicozzi} to find
$$
\begin{aligned}
\int_0^T |\hat\nabla W(x(\tau)| &\le \int_0^T \sqrt{-W'/k_2}=\int_0^T \sqrt{-W'/k_2}\,\tau^\fr{{1+\e}}{2}\,\tau^{-\fr{1+\e}{2}}\\
&\le  \left(\fr1{k_2}\int_0^T -W'\,\tau^{1+\e}\right)^\fr12   \left(\int_0^T\tau^{-(1+\e)}\right)^\fr12.
\end{aligned}
$$
It suffices to show that the first integral is bounded.  Integrating by parts gives
$$\int_0^T -W'\,\tau^{1+\e} = W(T)\tau^{1+\e}|_T^0 + (1+\e)\int_1^T W(\tau)\tau^\e.$$
Since $W(\tau)$ satisfies (\ref{eq_Woftau}) with exponent $1+2\e$, both terms are bounded
\end{proof}

Putting these lemmas together we get the following description of the flow in the local center manifold.
\begin{theorem}\label{th_centerflow}
Suppose $P$ is a degenerate restpoint with $v_0<0$. Let  $\cU$ be a sufficiently small neighborhood of $P$ and let $W^c_\cU$ be the local center manifold.  
If $\g(\tau)$ is a solution in $W^c_\cU$ which converges to $P$ as $\tau\into\infty$, then $\g$ has finite arclength.
\end{theorem}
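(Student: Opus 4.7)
The plan is to reduce Theorem~\ref{th_centerflow} to the three lemmas immediately preceding it, with essentially no additional work beyond bookkeeping. I would first translate the theorem to center-manifold coordinates. By the construction in Section~4, any solution $\g(\tau)$ of the blown-up system lying in $W^c_\cU$ has the form $(r,v,s,w)(\tau) = (0,\, v(x(\tau)),\, g(x(\tau)),\, Dg(x(\tau))\phi(x(\tau)))$, where $x(\tau)\in\cU\subset\R^k$ solves the pullback equation $x'=\phi(x)$. Convergence $\g(\tau)\to P$ as $\tau\to\infty$ forces $x(\tau)\to 0$.

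Next, I would convert the arclength of $\g$ to an arclength on the base. Since $g$ is an isometric immersion from $(\cU,\hat F)$ into $(\R^{2n-4},F)$ by construction of $\hat F$, the Fubini-Study arclength of $\g$ equals the pullback arclength of $x$:
\begin{equation*}
\int_0^\infty \normtwo{s'(\tau)}_{FS}\, d\tau \;=\; \int_0^\infty \sqrt{F(g(x(\tau)),\,Dg(x(\tau))\phi(x(\tau)))}\, d\tau \;=\; \int_0^\infty \sqrt{\hat F(x(\tau),\,\phi(x(\tau)))}\, d\tau.
\end{equation*}
Thus it suffices to show that $x(\tau)$ has finite arclength in $\R^k$ with respect to the smooth Riemannian metric given by $\hat F$.

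To obtain that finiteness, I would invoke the three lemmas immediately preceding the theorem in sequence. The lemma decomposing $\phi$ gives $\phi(x) = -k\hat\nabla W(x) + \g(x)$ with $k>0$ and $\g(x) = o(|\hat\nabla W(x)|)$. The \L{}ojasiewicz lemma supplies $|\hat\nabla W(x)|^2 \ge |W(x)-W(0)|^\a$ with $1 < \a < 2$ on a small neighborhood of the origin. These are exactly the hypotheses of the final lemma (perturbed gradient flow with \L{}ojasiewicz inequality), whose conclusion grants finite arclength of $x(\tau)$ with respect to any smooth Riemannian metric near $0$, in particular $\hat F$. Combined with the preceding isometry step, this yields finite Fubini-Study arclength for $\g$.

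The real work has already been done: the analytic heart of the argument is the \L{}ojasiewicz-type estimate in the final lemma, which rules out the slow-convergence pathology a degenerate restpoint might otherwise permit. The only genuinely new point to verify here is that the tail of $\g(\tau)$ is captured by a single graph chart $x\in\cU$ over which the center manifold is parametrized, which is immediate from the graph form \eqref{eq_centermanifold} together with $x(\tau)\to 0$. A minor cautionary remark worth including in the write-up is that the finite smoothness of $W^c$ does not matter, since $\hat F$ and $W$ inherit only as much regularity as $g$ has and the three lemmas make no analyticity assumption on $W$.
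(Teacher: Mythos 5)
Your proposal is correct and follows the paper's own route exactly: the paper proves Theorem~\ref{th_centerflow} precisely by ``putting these lemmas together,'' i.e.\ writing the center-manifold solution as $x'=\phi(x)$ with $s=g(x)$, $w=Dg(x)\phi(x)$, identifying the Fubini--Study arclength of $s(\tau)$ with the $\hat F$-arclength of $x(\tau)$ via the isometric immersion $g$, and applying the approximate-gradient decomposition, the \L{}ojasiewicz inequality for $W$, and the finite-arclength lemma in sequence. Your added remarks on the single graph chart and on the irrelevance of the center manifold's finite smoothness are consistent with the paper's treatment.
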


\section{Completion of the proofs}
To complete the proof of Theorems~\ref{th_finitearclength}, consider any total collision orbit whose reduced and normalized orbit converges to an isolated CC and let $\g(\tau)$ be the corresponding solution of the  blown-up,  reduced equations (\ref{eq_blowup}).  Then $\g(\tau)$ converges to a restpoint $P=(0,v_0,s_0,0)$ on the collision manifold with $v_0<0$ and with $s_0$ representing the isolated CC.  We have already  considered  the case  where  $s_0$ is a  nondegenerate critical  point so suppose $s_0$  is degenerate.  Choose a neighborhood of $P$ where Theorem~\ref{th_centerflow} is valid.  We may assume that $\g(\tau)\in\cU$ for all $\tau\ge 0$ and it follows that $\g(\tau)$ is in the local center-stable manifold  $W^{cs}_\cU$ for $\tau\ge 0$.  The center-stable fibration implies that there will be an associated orbit $\g_c(\tau)$ in the center manifold on the same fiber.   Since the fibers of the center-stable fibration are exponentially contracting we also have $\g_c(\tau)\into P$ as $\tau\into\infty$ and Theorem~\ref{th_centerflow} implies that $\g_c(\tau)$ has finite arclength.  To see that $\g(\tau)$ also has finite arclength note that since the distance between $\g(\tau)$ and $\g_c(\tau)$ converges to zero exponentially and since both curves are solutions of a smooth first-order differential equations, the distance between derivatives  $\g'(\tau)$ and $\g'_c(\tau)$ also converges to zero exponentially.  It follows that difference of their arclengths is bounded.  

This completes the proof of Theorem~\ref{th_finitearclength}.  As explained above, the finite arclength implies that the integral defining the spin angle $\t(\tau)$ converges which gives Theorem~\ref{th_nospin}.

\section{Appendix} 

We point out  an  error in
previous claims of  proofs that  total
collision solutions to the N-body problem cannot have infinite spin.  See \cite{Saari-H} and \cite{Elbialy}.
Those proofs rely on a configuration-dependent projection of 
  velocity vectors onto their  ``rotational part''.  See definition \ref{def: horiz} below. The authors
 correctly prove that  the velocities of their colliding solutions have zero  rotational part. (In the case of partial
 collisions they prove that these ``rotational parts'' tend exponentially fast to zero.)
The error made is in   an  implication drawn from this vanishing.  The papers  claim that if the velocities have zero rotational part
then the net rotation suffered during the motion must be finite.  We will show through explicit examples
that the   rotational part of  velocity can be identically  zero and that nevertheless 
the n-body configuration can suffer infinite rotation, i.e., infinite  spin.

A sensible reader might complain:   ``If the rotational part of a motion   is zero how can the  object  spin at all, let alone have infinite spin?''   If this object  were a rigid body then indeed this complaint is legitimate, such a body  cannot spin at all when the rotational part of its motion is zero.   
But our objects are not rigid,   rather they are  ``constellations'' -- configurations of n moving points   in the plane. 
Making sense of net rotation or spin  for non-rigid bodies  is a rather subtle matter.
At the heart of the matter are ideas from gauge theory as formalized by  the theory of principal bundles with connections as we now describe
by reverting to the falling cat.  (See \cite{mont_monthly}, \cite{mont_isohol} and \cite{mont_tour}  for more details on this perspective on
rotation and re-orientation of non-rigid bodies and its relation to the n-body problem.) 

To set the stage it is important to know that   the ``rotational part'' of our velocity is zero if and only if the total angular momentum
of the motion is zero.  See proposition \ref{lemma: ang mom} below.  
  A falling cat, dropped  with zero angular momentum, will perform a net rotation  and right itself.
Despite the fact that the rotational part of the velocity of the cat's configuration is zero, nevertheless it still spins
enough to right itself.    
The cat cannot change its angular momentum (as  viewed from its center of mass).  Angular momentum is conserved.
 It rights itself by changing its   shape
and  taking advantage of the fact that 
  ``angular momentum equals zero'' defines a connection with non-zero curvature.  

We  think of our moving n-body system as a kind of  falling cat.  Setting the 
total angular momentum to zero defines a connection on the principal circle bundle
$S = \S^{2n-3}  \to \CP(n-2)$.   (The equation of parallel transport for this connection is equation  
(\ref{eq_thetaprime}) in the body of the paper.)  
{\it The key to generating infinite spin    is  that the   curvature 
of this connection is not zero. }   As a consequence, we  can draw curves on $\CP(n-2)$ which limit
in infinite time to a fixed point $s_0$ but whose horizontal lifts have no limit, but rather
contain the whole fiber over $s_0$ in their closure. 

Write $\R^{2n}$ for the planar n-body  configuration space.  The group G of translations, scalings and rotations
acts on $\R^{2n}$.   These three generating subgroups of $G$ define   three linear subspaces of $\R^{2n}$ which we call
(translation), (scaling),  and (rotation).  The last two subspaces depend on the configuration $q \in \R^{2n}$.
 See below for details on the subspaces.  
  These three subspaces  do not exhaust $\R^{2n}$ 
if  $n > 2$.   The orthogonal complement of their direct sum represents the rest of configuration
space and vectors in this complement represent ``pure shape'' deformations so we call that the pure shape
subspace. 
Altogether then we get
  the ``Saari decomposition''  of velocity space: 
   \beq
   \R^{2n} =  \text{translation} \oplus \text{scaling} \oplus \text{rotation} \oplus \text{pure shape}.
   \Leq{Saari}
It is essential that we compute the
orthogonal complement defining (pure shape) relative to the mass metric.
 $$\langle v ,w \rangle = \Sigma m_a v_a \cdot w_a$$
 on $\R^{2n}$.   
 
 {\sc Remark.} Saari formalized this  decomposition 
  in  \cite{Saari_decomp} hence his name became attached to it. 
 The first three subspaces are   mutually orthogonal provided the center of mass of
 the configuration is zero. 
 
 \begin{definition}  The `rotational part' of a vector $v \in \R^{2n}$
 is its orthogonal projecton onto the rotation subspace.
 The vector is called `horizontal' if it lies in the pure shape subspace.
 A curve $q(t) \in \R^{2n}$ is called horizontal if its derivative $\dot q(t)$ is everywhere horizontal.
 \label{def: horiz}
 \end{definition}
 
 When we fixed the center of mass at the origin we got rid of the translational degrees of freedom
 and hence the translation subspace of $\R^{2n}$.   Recall that we identify the center-of-mass zero
 subspace of $\R^{2n}$ with $\C^{n-1}$ in which case the Saari decomposition becomes
   \beq
   \C^{n-1}  =  \text{scaling} \oplus \text{rotation} \oplus \text{pure shape}.
   \Leq{Saari B}
   and the mass metric is the real part of the Hermitian mass metric.  
   The scaling and rotation  groups act on this $\C^{n-1}$ by scalar multiplication,
   with scaling acting by   $z \mapsto \lambda z$,  $\lambda > 0$ real,  while 
  the rotation group acts by $z \mapsto e^{i \t} z$, with $\t$ the rotation angle.
  Differentiating these actions  at the identity, or what is the same, take the tangent space
  to their orbits at $z$  yields the scaling and rotation subspace.
  So the  
 scaling space consists of the real span of $z$ while the rotation subspace
 consists of the real span of $iz$.

 \begin{proposition} \label{lemma: ang mom} The rotational part of a velocity vector $\z$ attached at $z$ is zero
 if and only if the total angular momentum $J(z,\z)$ is zero. Moreover,
$$J(z, \z) = \im\cmetrictwo{z,\z}$$
 \end{proposition}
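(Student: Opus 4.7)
The plan is to prove both the ``if and only if'' statement and the explicit formula by computing the mass-orthogonal projection of $\z$ onto the rotation subspace and then matching the result against the classical angular momentum. The rotation subgroup acts on $\C^{n-1}$ by $z \mapsto e^{i\t}z$, so differentiating at $\t = 0$ identifies the tangent line to the rotation orbit through $z$ with $\mathrm{span}_\R(iz) \subset \C^{n-1}$. By definition of ``rotational part'', $\z$ has zero rotational part iff it is mass-orthogonal to this line, i.e.\ iff $\metrictwo{\z, iz} = 0$.

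Next I would rewrite this real pairing in terms of the Hermitian form $\cmetrictwo{\cdot,\cdot}$. Since that form is conjugate-linear in the first slot and linear in the second, $\cmetrictwo{\z, iz} = i\,\cmetrictwo{\z, z} = i\,\overline{\cmetrictwo{z,\z}}$. Writing $\cmetrictwo{z,\z} = a + bi$, this gives $\metrictwo{\z, iz} = \re\,\cmetrictwo{\z, iz} = \re\bigl(i(a-bi)\bigr) = b = \im\,\cmetrictwo{z,\z}$. So the rotational part of $\z$ vanishes iff $\im\,\cmetrictwo{z,\z} = 0$, which simultaneously yields the equivalence and identifies the quantity which is obstructing rotational-part vanishing as $\im\,\cmetrictwo{z,\z}$.

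To finish, I would identify that imaginary part with the total angular momentum $J(z,\z)$. Writing the $i$-th body as $q_i = x_i + iy_i \in \C$ with velocity $v_i = \dot x_i + i\dot y_i$, its contribution to the planar angular momentum is $m_i(x_i\dot y_i - y_i\dot x_i) = m_i\,\im(\overline{q_i}\,v_i)$, so $J = \im\sum_i m_i \overline{q_i} v_i$. Under the translation reduction $q = Pz$, $\dot q = P\z$, and using the definition $M = P^T\diag(m_1,m_1,\ldots,m_n,m_n)P$, the sum $\sum_i m_i \overline{q_i} v_i$ becomes $\overline{z}^T M \z = \cmetrictwo{z,\z}$; the zero-center-of-mass assumption guarantees that this reduction loses no information about $J$. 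Combining the two computations gives both the formula $J(z,\z) = \im\,\cmetrictwo{z,\z}$ and the stated equivalence.

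The whole argument is essentially a one-line computation, and there is no serious obstacle. The only point that requires care is the conjugate-linearity convention for $\cmetrictwo{\cdot,\cdot}$, since swapping slots changes the sign of the imaginary part; this is why the formula reads $\im\,\cmetrictwo{z,\z}$ rather than $\im\,\cmetrictwo{\z,z}$, and both computations above must use the same convention.
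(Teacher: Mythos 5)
Your argument is correct and follows essentially the same route as the paper's proof: identify the rotation subspace at $z$ as the real span of $iz$, observe that vanishing of the rotational part means mass-orthogonality to $iz$, convert that real pairing into $\im\cmetrictwo{z,\z}$, and recognize this as the classical planar angular momentum $\sum_i m_i\,\im(\overline{q_i}v_i)$. Your explicit passage through $q=Pz$ to justify $\sum_i m_i \overline{q_i}v_i=\cmetrictwo{z,\z}$ is a slightly more careful rendering of a step the paper does directly in coordinates, but it is not a different method.
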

   
{\sc Proof.}   In order to make the expressions conform
more closely to standard physics usage we will use $v$ for `velocity' instead of
$\z$.  The rotational part of $v = \z$ is zero if and only if it is orthogonal to the rotational subspace which   is the real span of   $i z$. 
So  $v$ has zero rotational part if $\re\cmetrictwo{iz,v} = 0$.  But $\re\cmetrictwo{iz,v} = - \im\cmetrictwo{z, v}$.
  Expanding out  
\begin{eqnarray*} - \im\cmetrictwo{z, v} & = & - \Sigma m_a \im (\bar z_a v_a) ) \\
& = &  - \Sigma m_a z_a \wedge v_a 
\end{eqnarray*}
where in the middle line  the wedge denotes the planar version of the
cross product:  $(x, y) \wedge (a, b) =    \im (x- iy) (a+ib) = xb-ya$
for $(x,y), (a,b) \in \R^2$.    This final expression will be recognized as the
standard expression for planar angular momentum:
$$  \Sigma m_a z_a \wedge v_a =  J(z,v).$$
QED

{\sc remark}  The orthogonal projection of $z$ onto the rotational subspace at $z$  is $i \omega z$
where the scalar $\omega = \frac{1}{I(z)} J(z, iz)$.    

We rewrite the angular momentum in terms of our variables $r, \theta, s$ introduced in
the equation  (\ref{loc section})):   
$z = rk e^{i \t} (s,1)$ with $k = \frac{1}{\normtwo{(s,1)}}$
 and the corresponding velocities  
$$\z = \dot z = (\frac{\dot r}{r} + \frac{\dot k}{k} + i\dot \t   )z  +re^{i\t}k (\dot s, 0).$$
It follows that $\cmetrictwo{z,\z} = ((\frac{\dot r}{r} + \frac{\dot k}{k} + i\dot \t   )r^2 + r^2 k^2 \cmetrictwo{(s,1), (\dot s, 0)}$ and 
from $J(z, \z) = \im \cmetrictwo{z, \z}$ we get
$$J(z,\z) = r^2 \dot \theta + r^2 k^2 \Omega(s, \dot s) ,  \text{ where } \Omega(s, \dot s) : = \im\cmetrictwo{(s,1), (\dot s, 0)}.$$

It follows that
\beq
 J(z, \z) = 0  \iff   \dot \theta = - \frac{\Omega(s, \dot s)}{ \normtwo{(s, 1)}^2} 
 \Leq{eq: ang mom 2} 
in agreement with equation (\ref{eq: ang mom}) where the angular momentum was denoted $\mu$. 

It will help in what follows to simplify this last  expression,  by  applying a linear transformation
to the   original  complex linear coordinates $z_i$  which renders our 
by applying a Hermitian mass metric in standard form. 
In these new variables, which we continue to call $z_i$, we have that   $\cmetrictwo{z,  z} = \Sigma \bar z_i  z_i$.
We similarly redefine the $s_i$ as the quotients of the new $z_i$ by the new $z_{n-1}$. 
Then with the $s_i$ similarly redefined: 
$$\frac{\Omega(s, \dot s)}{ \normtwo{(s, 1)}^2} =  \frac{ \im \Sigma \bar s_i \dot s_i}{ \Sigma_{i=1}^{n-2}  |s_i|^2 + 1 } $$
 
  Here then, is the main result of this appendix.
     \begin{proposition}  Given any $s_0 \in \CP(n-2)$ there are analytic curves
   $c(t)$ converging to $s_0$ in infinite time and whose horizontal lifts $z(t)$
   to $\C^{n-1} \setminus 0$   have infinite spin.
   \end{proposition}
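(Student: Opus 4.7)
\medskip

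\noindent\textbf{Proof plan.}  The plan is to produce an explicit analytic shape curve $s(t)$ in the affine chart of $\CP(n-2)$ centered at $s_0$ for which the horizontal-lift ODE (\ref{eq: ang mom 2}) has divergent $\theta$-integral.  By the transitivity of the unitary action on $\CP(n-2)$ (which preserves the Hermitian mass metric and hence the connection form $\Omega/\normtwo{(s,1)}^2$), it suffices to construct such a curve for $s_0 = 0$ in the affine chart $(s_1,\dots,s_{n-2})\in\C^{n-2}$ we have already been using, with the Hermitian mass metric normalized to the standard form described just before the proposition.

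First I would place all the motion in the first coordinate:  set $s_i(t) \equiv 0$ for $i\ge 2$ and
\[
s_1(t) \;=\; \rho(t)\, e^{i\phi(t)},
\]
with $\rho$ and $\phi$ real analytic on $[0,\infty)$.  A direct computation using $\Omega(s,\dot s) = \im\sum \bar s_i \dot s_i$ gives
\[
\Omega(s(t),\dot s(t)) \;=\; \rho(t)^2\,\dot\phi(t),\qquad \normtwo{(s(t),1)}^2 \;=\; 1+\rho(t)^2,
\]
so the parallel-transport equation (\ref{eq: ang mom 2}) becomes
\[
\dot\theta(t) \;=\; -\,\frac{\rho(t)^2\,\dot\phi(t)}{1+\rho(t)^2}.
\]

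Next I would choose $\rho$ and $\phi$ so that $\rho\to 0$ (hence $s(t)\to 0 = s_0$) but the right-hand side is not integrable.  The concrete choice
\[
\rho(t) \;=\; \frac{1}{1+t},\qquad \phi(t) \;=\; (1+t)^2
\]
is analytic on $[0,\infty)$, gives $\rho^2\dot\phi = 2/(1+t)$, and therefore
\[
\dot\theta(t) \;=\; -\,\frac{2}{(1+t)\bigl(1+(1+t)^{-2}\bigr)} \;\sim\; -\,\frac{2}{1+t}\quad\text{as }t\to\infty,
\]
so $\theta(t) = \theta(0)+\int_0^t \dot\theta\,d\tau \to -\infty$ logarithmically.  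The horizontal lift is then the analytic curve
\[
z(t) \;=\; r(t)\, e^{i\theta(t)}\,\frac{(s(t),1)}{\normtwo{(s(t),1)}} \;\in\; \C^{n-1}\setminus 0,
\]
for any analytic positive scaling $r(t)$ (for instance $r\equiv 1$); this $z(t)$ has zero angular momentum by construction, projects to $s(t)\to s_0$ in $\CP(n-2)$, and accumulates the whole fiber $SO(2)\cdot s_0$ in its limit set, which is the infinite-spin phenomenon.

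The only subtlety — and the one substantive step — is verifying that the construction really is coordinate-independent, i.e.\ that an arbitrary target $s_0$ can be moved to the origin without disturbing the form of the connection.  This is where I would invoke the unitary invariance of $\cmetrictwo{\cdot,\cdot}$: a unitary change of basis in $\C^{n-1}$ sending the lift of $s_0$ to $(0,\dots,0,1)$ transforms the connection one-form $\Omega/\normtwo{(s,1)}^2$ into its analogue in the new affine chart, so the local calculation above applies verbatim at any $s_0\in\CP(n-2)$.  Everything else — analyticity on $[0,\infty)$, convergence of $s(t)$ to $s_0$, divergence of $\int\dot\theta\,dt$ — is immediate from the explicit formulas.
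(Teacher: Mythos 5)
Your proposal is correct and follows essentially the same route as the paper: reduce to $s_0=0$ in an affine chart with the mass metric in standard form, spiral into the origin along the single coordinate $s_1=\rho e^{i\phi}$, and choose $\rho\to 0$ with $\int \rho^2\dot\phi\,dt=\infty$ so that the horizontal-lift equation forces $\theta$ to diverge logarithmically. The only difference is the cosmetic choice of spiral (the paper uses $\rho=1/\sqrt{t}$, $\phi=ct$; you use $\rho=1/(1+t)$, $\phi=(1+t)^2$, which has the mild advantage of being analytic at $t=0$).
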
 
   \noindent Horizontal curves all have zero rotational part. The
   proposition shows that    having zero rotational part does not guarantee finite spin.

   {\sc Proof.}  We may take $s_0$ to have affine coordinates $s_i = 0$
   and so represented by the point with  homogeneous coordinates $[0, \ldots , 0, 1]$. 
   Any   curve whose projection to $\CP (n-2)$ converges to $s_0$
   is represented in affine coordinates by a curve $s(t) = (s_1 (t), \ldots , s_{n-2} (t))$
   with the $s_i (t) \to 0$ as  $t \to \infty$.  
   Any curve in $\C^{n-1} \setminus \{0\}$ which projects onto this curve has
   the form  $(r(t), \theta (t),  s(t))$ in our coordinates.  The curve is horizontal if and only if
   $\theta$ satisfies the differential equation (\ref{eq: ang mom 2}).  
   We integrate
   this  zero angular momentum equation to find that  
   $$\theta(t) - \theta (0) = - \int_{s([0, t])}  \frac{ \Omega(s, ds)}{\|s \|^2 +1}.$$
     where we set  $\Sigma_{i=1}^{n-2}  |s_i|^2 = \|s \|^2$.
     
In order to achieve the desired  curves $c(t)$ of the lemma  we only need to vary the   first
   affine coordinate $s_1$, which is to say, we take  curves of the form
   $s(t) = (s_1 (t), 0, \ldots, 0)$.  For such a curve the integrand occuring in the line integral is  
   $\frac{\im (\bar s_1 ds_1 ) }{|s_1|^2 + 1}.$  For the purposes of the proof, we set 
   $$s_1 = r e^{i \psi}$$
   in which case $\im \bar s_1 ds_1 = r^2 d \psi$ and $|s_1|^2 +1 = r^2 + 1$
   so that our integrand is
   $$ \frac{ \Omega(s, ds)}{|s|^2 + 1} = \frac{ r^2 d \psi }{r^2 + 1}.$$
   Now suppose our curve $s_1 (t)$  lies in the unit disc $r < 1$ so that
   $1/(r^2 + 1) > 1/2$  and  that our curve also  spirals counterclockwise into the origin of the $s_1$ plane so 
   that $\psi (t)$ increases
   monotonically with $t$.  We then
   get
   $$\theta(t) <  \theta (0) - \frac{1}{2}  \int_0 ^t  r(t)^2 \dot \psi  dt.$$

To produce curves $c(t)$ with infinite spin we are left with an easy task:   insure
that $r(t) \to 0$ as $t \to \infty$  while the integral $\int_0 ^t  r(t)^2 \dot \psi  dt$ diverges to $+ \infty$.  
 As one class
of examples, take $r(t) = \frac{1}{\sqrt{t}}$ and $\psi = ct$ for any positive  constant $c$.
Then $r^2 \dot \psi = \frac{c}{t}$ and the angle $\theta(t)$
diverges logarithmically to minus  infinity.   
QED

\end{document}